
\documentclass[letterpaper, 10 pt, conference]{ieeeconf}  

\usepackage[skip=2pt,font=footnotesize]{subcaption}

\IEEEoverridecommandlockouts                              

\overrideIEEEmargins                                      




\usepackage[top=0.75in, bottom=0.75in, left=0.75in, right=0.75in]{geometry}
\usepackage[T1]{fontenc}
\usepackage[utf8]{inputenc}
\usepackage{bm}
\usepackage[unicode=true,
 bookmarks=false, 
 breaklinks=false,pdfborder={0 0 1},colorlinks=false]
 {hyperref}
\hypersetup{
 colorlinks,citecolor=blue,filecolor=blue,linkcolor=blue,urlcolor=blue}
\usepackage{xcolor,colortbl}
\definecolor{Gray}{gray}{0.85}
\usepackage{amsmath}
\usepackage{amssymb} 
\makeatletter

\usepackage{cite}
\usepackage{comment}
\usepackage{booktabs,mathtools}
\usepackage{graphicx}
\usepackage{algorithm}
\usepackage{algorithmic}
\usepackage{multirow}
\usepackage{dsfont}
\usepackage{color}
\usepackage{float}
\usepackage[capitalize]{cleveref}
\crefname{assumption}{Assumption}{Assumptions}
\usepackage{url}
\usepackage{tcolorbox}
\usepackage{changepage}

\allowdisplaybreaks

\newtheorem{theorem}{Theorem}
\newtheorem{assumption}{Assumption}
\newtheorem{definition}{Definition}

\newtheorem{remark}{Remark}
\DeclareMathOperator{\ind}{\mathds{1}}  

\newcommand{\xb}{\mathbf{x}}
\newcommand{\yb}{\mathbf{y}}

\newcommand{\bb}{\bm{b}}


\newcommand{\cA}{\mathcal{A}}
\newcommand{\cB}{\mathcal{B}}

\newcommand{\cD}{\mathcal{D}}

\newcommand{\cF}{\mathcal{F}}

\newcommand{\cK}{\mathcal{K}}

\newcommand{\cM}{\mathcal{M}}

\newcommand{\cP}{\mathcal{P}}

\newcommand{\cX}{\mathcal{X}}


\newcommand{\dW}{d_{W}} 
\newcommand{\E}{\mathbb{E}} 
\newcommand{\PP}{\mathbb{P}} 
\newcommand{\hPN}{\hat{P}_N} 
\newcommand{\R}{\mathbb{R}} 

\title{\LARGE \bf
Decision-Dependent Distributionally Robust Optimization with Application to Dynamic Pricing 
}

\author{Chengrui Qu, Huiwen Jia, and Pengcheng You
\thanks{C. Qu and P. You are with the College of Engineering, Peking University, Beijing, China. P. You is the corresponding author.
        {\tt\small qcr2021@stu.pku.edu.cn, pcyou@pku.edu.cn}}%
\thanks{H. Jia is with the Department of Industrial Engineering and Operations Research, University of California, Berkeley, CA, USA.
        {\tt\small huiwenj@berkeley.edu}}%
}

\begin{document}

\maketitle
\thispagestyle{empty}
\pagestyle{empty}

\begin{abstract}
We consider decision-making problems under decision-dependent uncertainty (DDU), where the distribution of uncertain parameters depends on the decision variables and is only observable through a finite offline dataset. To address this challenge, we formulate a decision-dependent distributionally robust optimization (DD-DRO) problem, and leverage multivariate interpolation techniques along with the Wasserstein metric to construct decision-dependent nominal distributions (thereby decision-dependent ambiguity sets) based on the offline data. We show that the resulting ambiguity sets provide a finite-sample, high-probability guarantee that the true decision-dependent distribution is contained within them. Furthermore, we establish key properties of the DD-DRO framework, including a non-asymptotic out-of-sample performance guarantee, an optimality gap bound, and a tractable reformulation. The practical effectiveness of our approach is demonstrated through numerical experiments on a dynamic pricing problem with nonstationary demand, where the DD-DRO solution produces pricing strategies with guaranteed expected revenue.

\end{abstract}

\section{Introduction}

Many real-world decision-making problems involve uncertainty, where the underlying distributions are unobservable and only accessible through limited historical data. Two widely used approaches to handle such uncertainty are Stochastic Programming (SP) and Distributionally Robust Optimization (DRO). Traditional SP optimizes the expected outcome under a fixed distribution, but can suffer from degraded performance when deployed in practice due to overfitting the training data, a phenomenon known as the optimizer's curse \cite{optimizer-curse}. To mitigate this, \emph{distributionally robust optimization} (DRO) has been proposed as a more robust alternative \cite{bental2009,bertsimas2004,kuhn2018}. Given historical data and a robustness level, DRO constructs an \emph{ambiguity set} of plausible distributions and optimizes for the worst-case scenario within this set, leading to a $\min$-$\max$ formulation. DRO has been shown, both theoretically and empirically, to effectively reduce out-of-sample risk and is often more tractable than the original stochastic optimization problem, even when the true data-generating distribution is known \cite{delage2010,goh2010,wiesemann2014,Breton1995}.

\subsection{Decision-Dependent Uncertainty}\label{subsec: ddu}

In many practical problems, a further complication arises: the distribution of the uncertain parameters depends on the decisions themselves—a phenomenon referred to as \emph{endogenous uncertainty}, or more formally, \emph{decision-dependent uncertainty} (DDU)\cite{Goel2006,Dupacova2006,Hellemo2018,yu2022multistage}. This arises in diverse domains including financial market modeling \cite{kurzmotolese2001}, 
robust network design \cite{viswanath2004}, stochastic traffic assignment \cite{hu2006}, pricing \cite{jia2024}, and resource management \cite{tsur2004}. These problems can often be formulated as the following decision-dependent optimization (DDO) problem:
\begin{equation}\label{eq:so}
    \min_{\xb\in\cX}\E_{Q(\xb)}[h(\xb,\xi)], \tag{DDO}
\end{equation}
where $\xb\in \R^d$ is the decision variable, $\cX$ denotes the feasible region,  $\xi \in \R^k$ is the random parameters following a decision-dependent distribution $Q(\xb)$, and $h(\cdot,\cdot): \R^d\times\R^k\mapsto\overline{\R}$ represents the objective function. Here, $\overline{\R}=\R\cup\{\pm \infty\}$ denotes the extended real number.
Since $Q(\xb)$ is unobservable in practice, directly solving \eqref{eq:so} is typically infeasible. Traditional SP and DRO approaches are inadequate in this setting due to the coupled dependency between the inner-layer model uncertainties and the outer-layer optimization decisions. While \cite{dd-saddle} develops a primal–dual method to compute fixed points for DDO in competitive settings, it does not incorporate mechanisms for out-of-sample robustness.

This limitation has motivated a growing body of research that seeks to address DDO using DRO-based approaches \cite{luo2020,noyan2022,zhang2016,royset2017,fonseca2023}, primarily through the construction of decision-dependent ambiguity sets. For example,
\cite{luo2020} shows that DRO with decision-dependent ambiguity sets can be reformulated as $\min$-$\min$ programs with infinitely many constraints, solvable via cutting-surface methods. \cite{noyan2022} proposes tractable reformulations based on the Earth Mover’s Distance. \cite{fonseca2023} studies a standard DRO setting but treats the objective function as a decision-dependent random variable, which can be interpreted as a special case of DRO for DDO. Despite these advances, a core challenge remains: \emph{how to systematically design decision-dependent ambiguity sets of appropriate size while providing provable out-of-sample guarantees for the resulting solutions}.

\subsection{Decision-Dependent DRO (DD-DRO)}

To address this challenge, we introduce a novel interpolation-based \emph{decision-dependent DRO} (DD-DRO) model that leverages multivariate interpolation and the Wasserstein metric to construct ambiguity sets in a data-driven, decision-aware, and robust manner. Our framework guides practitioners in choosing the size of these ambiguity sets based on available data and robustness requirements, thereby avoiding overly conservative or overly optimistic solutions.

This approach has broad practical relevance. One example is the problem of pricing charging services at electric vehicle charging stations (EVCSs), where demand is uncertain, time-dependent, and responsive to prices - that is, decision-dependent. Existing methods often rely on restrictive assumptions such as known demand functions \cite{stefffen2019}, full knowledge of transportation systems \cite{lai2023}, or specific user behaviors \cite{Alizadeh2017}. The literature on service pricing typically resorts to queueing theory to model the complex system dynamics, which is challenging even in the stationary and static pricing setting \cite{jia2022icml,jia2022neurips}. Despite growing interest, a practical, data-driven pricing framework for EVCSs remains lacking \cite{lai2023}. Our method fills this gap by enabling operators to design pricing strategies directly from historical data and a robustness parameter - without modeling complex system dynamics - while still guaranteeing expected revenue. We demonstrate the effectiveness of our approach in this context in \cref{sec:pricing}.

\subsection{Contributions}

To the best of our knowledge, this is the first tractable framework for decision-dependent DRO that offers non-asymptotic guarantees on both out-of-sample performance and optimality gap. 
Our contributions are summarized as follows:
\begin{itemize}
    \item We study optimization problems under decision-dependent uncertainty, where the true distribution is unobservable and accessible only via finite historical data. We propose a novel method for constructing decision-dependent ambiguity sets using the Wasserstein metric and multivariate interpolation under a specified robustness level.
    \item We provide theoretical guarantees for the out-of-sample performance of the DD-DRO solution and establish bounds on its optimality gap. We also show that DD-DRO can be reformulated as a semi-infinite $\min$-$\min$ optimization problem, which can be solved with a cutting-surface algorithm.
    \item We demonstrate the effectiveness of our method through a dynamic pricing application, supported by numerical experiments.
\end{itemize}

\section{Overview: Decision-Dependent Distributionally Robust Optimization}

As we discussed in Section \ref{subsec: ddu}, directly solving \eqref{eq:so} is infeasible without the knowledge of $Q(\xb)$, and in this work, we consider the following \emph{decision-dependent distributionally robust optimization} (DD-DRO) problem:
\begin{equation}\label{DD-DRO}
    \min_{\xb\in\cX}\max_{P\in\cP(\xb)}\E_{P}[h(\xb,\xi)], \tag{DD-DRO}
\end{equation}
where $\cP(\xb) \subseteq \cM(\Xi, \cB(\Xi))$ is the decision-dependent ambiguity set under the decision $\xb$.  Here, $\cB(\Xi)$ denotes the Borel $\sigma$-field on $\Xi$ and $\cM(\Xi, \cB(\Xi))$ is the set of all probability measures on the measurable space $(\Xi, \cB(\Xi))$. We assume access to a pre-collected offline dataset $\cD=\{(\xb_n,\xi_n)\}_{n\in[N]}$, where $[N]$ denotes $\{1,2,\cdots,N\}$ and each $\xi_n$ are sampled from the true but unknown distribution $Q(\xb_n)$. We introduce a systematic method for constructing $\cP(\xb)$ in Section \ref{sec: ambiguity set} and provide a non-asymptotic guarantee (see Theorem \ref{thm:non-asymptotic}) on
$$
\mathbb{P}\left(Q(\xb)\in \cP(\xb),\ \forall \xb\in\cX\right)\ge 1-\beta,
$$
where $\beta\in(0,1)$ is a prescribed confidence level.

\begin{figure}
    \centering
    \begin{subfigure}[b]{0.48\textwidth}
    \includegraphics[width=\linewidth]{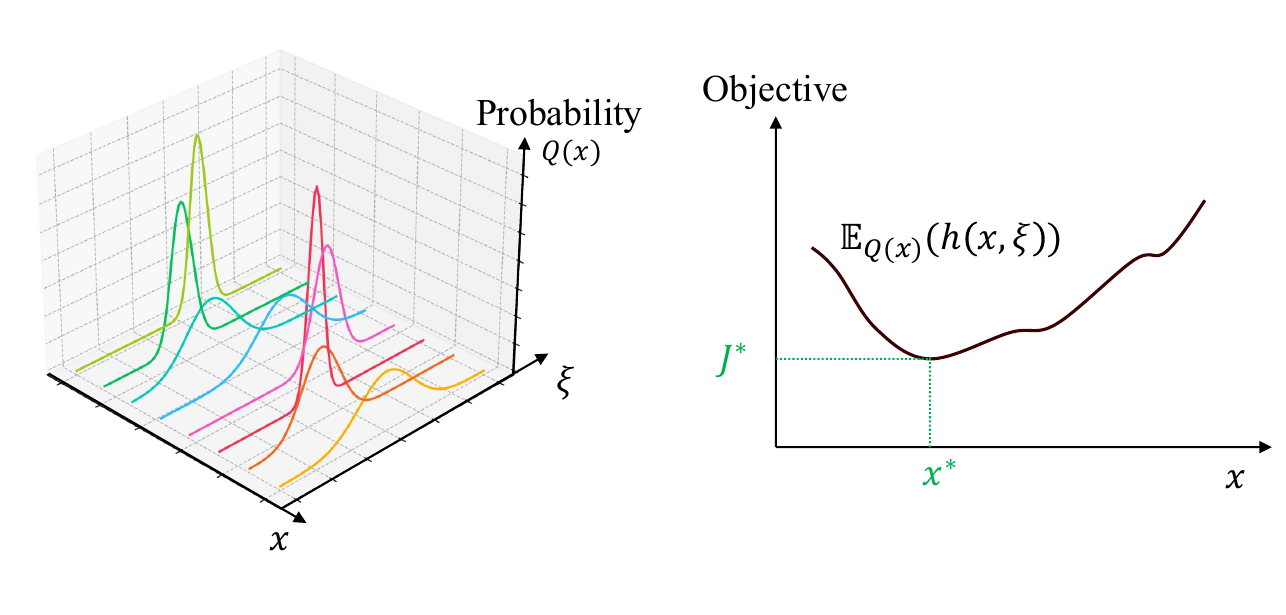}
    \caption{The optimization problem \eqref{eq:so} we aim to solve under DDU.}
    \label{fig:true}
    \end{subfigure}

    \begin{subfigure}[b]{0.48\textwidth}
    \includegraphics[width=\linewidth]{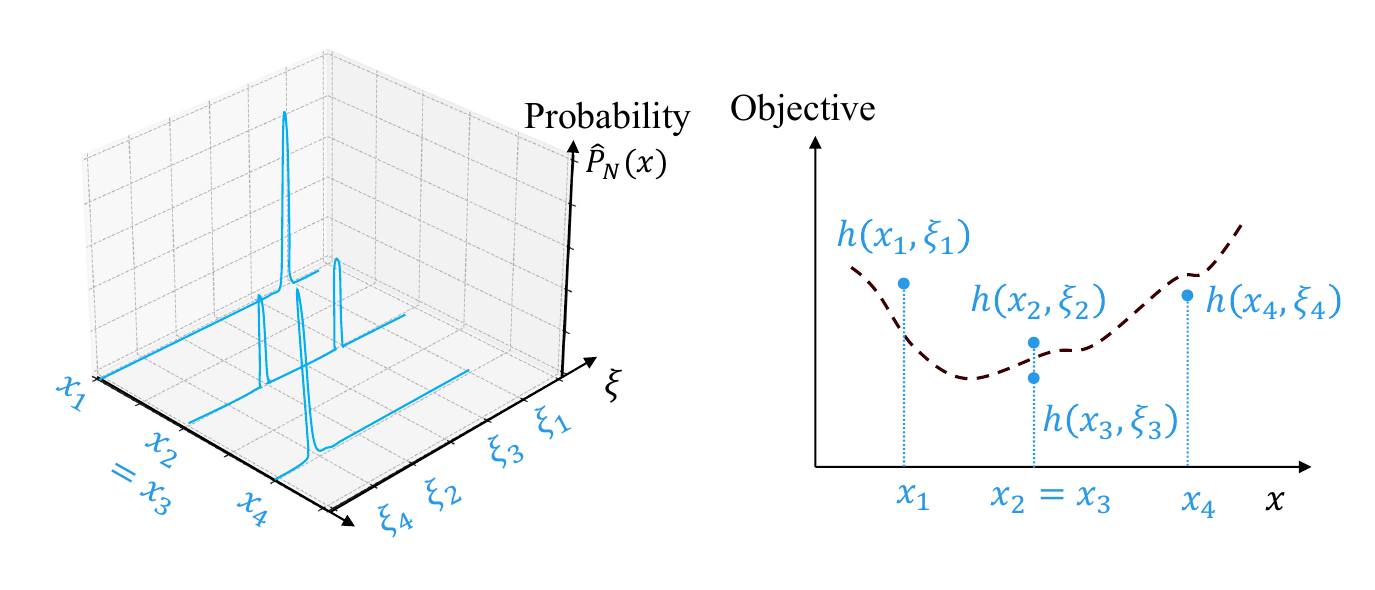}
    \caption{Available data $\cD=\{(\xb_n,\xi_n)\}_{n\in[N]}$ for offline learning.}
    \label{fig:data}
    \end{subfigure}

    \vspace{1mm}

    \begin{subfigure}[b]{0.48\textwidth}
    \includegraphics[width=\linewidth]{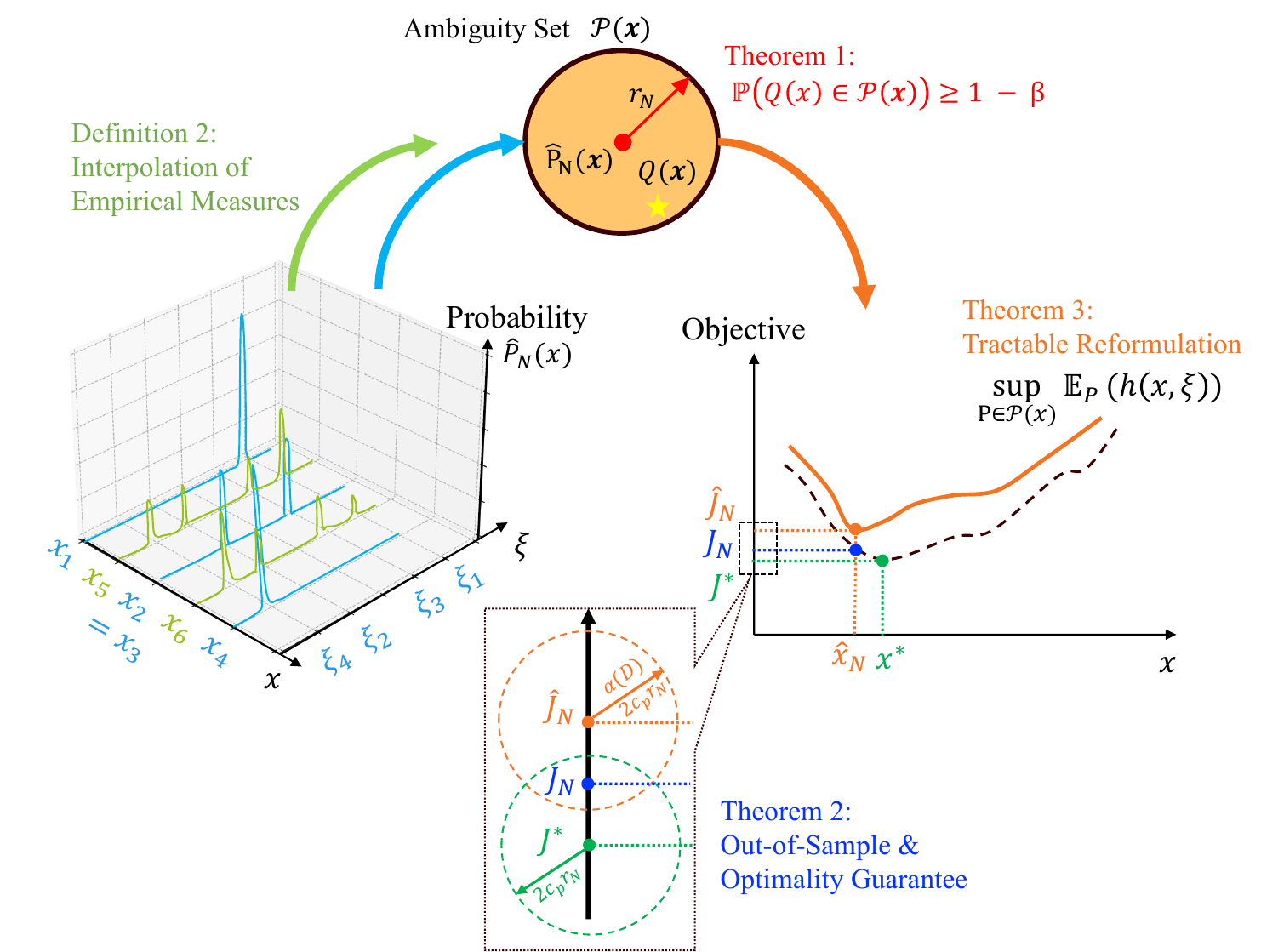}
    \caption{The proposed DD-DRO approach, including interpolation-based nominal distribution construction for $\hat{P}_{N}(x)$, decision-dependent ambiguity set construction for $\mathcal{P}(x)$, and performance guarantee of \eqref{DD-DRO} solution $\hat{\xb}_N$.}
    \label{fig:dd-dro}
    \end{subfigure}

    \caption{Overview of the DDO problem, the available data, and the proposed DD-DRO framework.}
    \label{fig:overview}
\end{figure}

Next, we provide performance guarantees for the solution of \eqref{DD-DRO}, including (i) the out-of-sample performance when it is implemented under the true distribution $Q(\xb)$ and (ii) how close it is to the true optimal objective value.
Let $\hat{J}_N$ denote the optimal value of \eqref{DD-DRO}, and let $\hat{\xb}_N$ be an optimal 
solution. Let 
$$
J_N=\E_{Q(\hat{\xb}_N)}[h(\hat{\xb}_N,\xi)].
$$
denote the true expected out-of-sample performance of the solution $\hat{\xb}_N$, and let $J^\star$ denote the optimal objective value of the original problem \eqref{eq:so}.
Under a shared confidence level $\beta\in(0,1)$ and an error bound $\alpha(\cD,\beta)$ (which depends on the dataset $\cD$ and $\beta$), we establish the following guarantees (see \cref{thm:out-of-sample-performance}):
\begin{itemize}
    \item \textbf{Out-of-sample performance}: The performance of $\hat{\xb}_N$ remains close to its estimated value when deployed in the real environment:
    \begin{equation*}
        \PP\left(|\hat{J}_N-J_N|\le \alpha(\cD,\beta)\right)\ge 1-\beta.
    \end{equation*}
    \item \textbf{Optimality gap}: The true performance of $\hat{\xb}_N$ is close to the best achievable value:
    \begin{equation*}
        \PP \Big( |J_N-J^{\star}|\le \alpha(\cD,\beta) \Big) \ge 1-\beta.
    \end{equation*}
\end{itemize}

To support the practical implementation of DD-DRO, we also provide a tractable reformulation of \eqref{DD-DRO} as a $\min$-$\min$ optimization problem (see \cref{thm:tractable-reformulation}). Figure~\ref{fig:overview} summarizes the DDO problem, the available offline data, our proposed DD-DRO approach, and the associated theoretical guarantees.

\section{Interpolation-based DD-DRO with Wasserstein Metric}\label{sec: ambiguity set}

In this section, we formally introduce our design of the DD-DRO framework using the Wasserstein metric and multivariate interpolation techniques. Specifically, we construct a decision-dependent nominal distribution by interpolating empirical distributions obtained from the offline dataset, and define an ambiguity set centered at this nominal distribution using the Wasserstein metric.

We begin by introducing the Wasserstein metric.
\begin{definition}[$L_1$-Wasserstein Metric \cite{clark}]
    Given a $\sigma$-algebra $\cF$,  for any two probability measures $P_1, P_2\in\cM(\Xi,\cF)$, the $L_1$-Wasserstein distance between them is defined as:
$$\dW(P_1,P_2)\triangleq\inf_{\gamma\in\Gamma(P_1,P_2)}\int_{\Xi\times\Xi}d(s_1,s_2)\gamma(ds_1\times ds_2),
$$
where $\Gamma(P_1,P_2)\triangleq\{\gamma\in\cM(\Xi\times\Xi,\cF\times\cF):\gamma(A\times \Xi)=P_1(A),\gamma(\Xi\times A)=P_2(A),\forall A\in\cF\}$ is the set of all couplings of $P_1$ and $P_2$, and $d(\cdot,\cdot)$ is a $\cF\times\cF$-measurable metric defined on $\Xi$. In our setting, we assume this metric is given by a p-norm $||\cdot||_p$ for $p\ge 1$. 
\end{definition}

We now define the decision-dependent ambiguity sets based on this metric. To make the dependence on the decision variable $\xb$ explicit, we consider a decision-dependent nominal distribution $\hPN(\xb)\in\cM(\Xi,\cB(\Xi))$, and consider ambiguity sets of forms given in \cref{def:ambiguity-set}. 

\begin{definition}\label{def:ambiguity-set}
Given a radius $r_N>0$, a Wasserstein distance based ambiguity set $\cP(\xb)$ for $\hPN(\xb)$ is defined as:
$$
\cP(\xb)\triangleq\{P\in\cM(\Xi,\cB(\Xi))\mid \dW(P,\hPN(\xb))\le r_N\}.
$$
\end{definition}

Directly choosing the nominal distribution as one of the empirical distributions from the offline dataset — as is done in standard DRO approaches \cite{kuhn2018} — is not feasible in our setting, since these distributions do not depend on the decision variable $\xb$. Therefore, we resort to multivariate interpolation methods to interpolate between these empirical measures.

Specifically, for each unique decision point $\xb_{n_i}$ in the offline dataset $\cD$, we construct an empirical distribution $\mu_i$ based on the observed samples of $\xi$. Let $\{\xb_{n_i}\}_{i \in [N_\xb]}$ be the set of distinct decision values, and $\{\xi_{m_i}\}_{i \in [N_\xi]}$ be the set of distinct realizations of $\xi$ in $\cD$, where $N_\xb$ and $N_\xi$ are the numbers of distinct realizations of $\xb$ and $\xi$ in the dataset, respectively. Let $\ind[\cdot]$ denote the indicator function. For each $\xb_{n_i}$, define 
 $N_i\triangleq\sum_{n=1}^N\ind[\xb_n=\xb_{n_i}]$, and the corresponding empirical measure of $\xi$ at $\xb_{n_i}$:
\begin{equation*}
    \mu_i(\xi) \triangleq \frac{1}{N_i} \sum_{\{n\in [N]\mid\xb_n=\xb_{n_i}\}}\delta_{\xi_n}(\xi),
\end{equation*}
where $\delta_{\xi_n}(\cdot)$ denotes the Dirac measure at the sample realization $\xi_n$. We assume $\cX$ is equipped with a metric $d_\cX(\cdot,\cdot)$. We now define an interpolation over these empirical distributions.
\begin{definition}[Interpolation of Empirical Measures]\label{def:interpolation}
    Given empirical measures $\{\mu_i(\cdot)\}_{i\in[N_\xb]}$ corresponding to decisions $\{\xb_{n_i}\}_{i \in [N_\xb]}$, an interpolation of these empirical measures is defined as:
    \begin{equation*}
        \hPN(\xb)\triangleq \sum_{i=1}^{N_\xb}\omega_i(\xb)\mu_i(\cdot),\ \forall \xb\in\cX,
    \end{equation*}
    where the weights $\{\omega_i(\xb)\}_{i\in[N_\xb]}$ satisfy:
    $$
    \left\{
    \begin{aligned}
        &\omega_i(\xb)\ge 0,\ \forall i\in[N_\xb], \forall \xb\in\cX\\
        \sum_{i=1}^{N_\xb} &\omega_i(\xb)=1,\ \forall \xb\in\cX, \\
        &\omega_i(\xb_{n_i})=1,\ \forall i \in[N_\xb].\\
    \end{aligned}
    \right.
    $$
    In addition, we require the interpolation to be Lipschitz continuous at each $\xb_{n_i}$ with respect to the Wasserstein distance. That is, there exists a constant $c_1\ge 0$, such that for any $\xb\in\cX$, 
    \begin{equation*}
        \dW(\hPN(\xb),\hPN(\xb_{n_i}))\le c_1 \inf_{j\in[N_\xb]} d_\cX(\xb,\xb_{n_j}).
    \end{equation*}
\end{definition}
\begin{remark}
Intuitively, the interpolation is defined through a set of decision-dependent weights that vary smoothly with respect to $\xb$ and exactly recover the empirical distributions at each observed decision point. Such an interpolation always exists and can be efficiently calculated. For example, nearest-neighbor interpolation yields a valid construction, where for each $i\in[N_\xb]$,
$$
    \omega_i(\xb)=\ind\left[d_\cX(\xb,\xb_{n_i})=\inf_{j\in[N_\xb]}\{d_\cX(\xb,\xb_{n_j})\}\right],
$$
in which case the Lipschitz constant $c_1=0$. 
\end{remark}
The resulting distribution $\hPN(\xb)$ is supported on the finite set $\{\xi_{m_i}\}_{i \in [N_\xi]}$, and can be rewritten as:
\begin{equation}\label{eq:def-hpn}
    \hPN(\xb)=\sum_{i=1}^{N_\xi}f_i(\xb)\delta_{\xi_{m_i}}(\cdot),
\end{equation}
where
\begin{equation}\label{eq:def-of-f_i}
f_i(\xb)\triangleq\sum_{n=1}^N\ind[\xi_n=\xi_{m_i}]\sum_{j=1}^{N_\xb}\frac{\omega_j(\xb)}{N_j}\ind[\xb_n=\xb_{n_j}].
\end{equation}
It is straightforward to verify that $\sum_{i=1}^{N_\xi}f_i(\xb)=1$. Due to this finite support, using the interpolated measures $\hPN(\xb)$ as the nominal distribution in the ambiguity set enables tractable computation of the Wasserstein distance. Moreover, since $\hPN(\xb)$ is derived from empirical data, measure concentration results can be applied to establish finite-sample guarantees (see Section \ref{sec: theoretical}).
\section{theoretical guarantees}\label{sec: theoretical}

In this section, we formally discuss the coverage property of the designed ambiguity set, the out-of-sample performance guarantee, the optimality guarantee, and the tractability of DD-DRO.
\subsection{Necessary Assumptions}
We first introduce the necessary assumptions for the theoretical analysis.
\begin{assumption}\label{assumption:compact-continuous}
    The sets $\cX$ and $\Xi$ are compact and bounded, and the objective function $h(\cdot,\cdot)$ is bounded on $\cX\times \Xi$. Moreover, for any $\xb\in\cX$, there exists a constant $c_p>0$ such that for any $s_1,s_2\in\Xi$,
    \begin{equation*}
        \left|h(\xb,s_1)-h(\xb,s_2)\right|\le c_p ||s_1-s_2||_p.
    \end{equation*}
\end{assumption}
\cref{assumption:compact-continuous} is a standard regularity condition in DRO literature. In the context of DD-DRO, we further introduce the following assumption to ensure that the problem is well-posed.
\begin{assumption}[Lipschitz Continuity]\label{assumption:continuous}
    There exists a constant $c_2\ge 0$ such that for any $\xb,\yb\in\cX$:
    \begin{equation*}
        \dW\left(Q(\xb),Q(\yb)\right)\le c_2 d_\cX(\xb,\yb).
    \end{equation*}
\end{assumption}
\cref{assumption:continuous} ensures that small changes in the decision variable do not lead to large changes in the true distribution $Q(\xb)$. With these assumptions in place, we now present the main theoretical results.

\subsection{High-Probability Coverage of Ambiguity Set}

As stated before, we construct decision-dependent ambiguity sets of forms in \cref{def:ambiguity-set} for DD-DRO, and choose $\hPN(\xb)$ to satisfy \cref{def:interpolation}. Under a given confidence level $\beta\in(0,1)$, we choose $r_N$ as follows. Let $\cA_r(\xb)\triangleq\{\yb \in\cX\mid d_\cX(\xb,\yb)\le r\}$ denote the ball centered at $\xb$ with radius $r>0$, and let $r_\cD\triangleq\inf\{r>0\mid \cX\subseteq \cup_{i\in [N_\xb]}\cA_r(\xb^i)\}$ be the minimum covering radius of $\cX$. Then $r_N$ is chosen as:
\begin{equation}\label{eq:def-rn}
    r_N= (c_1+c_2)r_\cD+\left(\frac{b(\beta,c_3)}{c_4}\right)^{\frac{1}{k}},
\end{equation}
where $c_1$ and $c_2$ is defined in \cref{def:interpolation} and \cref{assumption:continuous} respectively, $c_3$ and $c_4$ are constants independent of $\cD$, $k$ is the dimension of $\xi$, and $b(\beta,c)\triangleq \inf\{t>0\mid \sum_{i=1}^{N_\xb}\exp\{-tN_i\}<\frac{\beta}{c}\}$ is a sample-dependent term.

We show that with probability at least $1-\beta$, the true distribution is contained in such an ambiguity set.

\begin{theorem}[Coverage of Ambiguity Set]\label{thm:non-asymptotic}
    Under \cref{assumption:compact-continuous,assumption:continuous}, 
     given $\hPN(\xb)$ in \cref{eq:def-hpn} and $r_N$ in \cref{eq:def-rn}, the ambiguity set $\cP(\xb)$ satisfies:
    \begin{equation}
        \mathbb{P}(Q(\xb)\in \cP(\xb),\ \forall \xb\in\cX)\ge 1-\beta.
    \end{equation}
\end{theorem}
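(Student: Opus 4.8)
The plan is to establish the uniform coverage by reducing the event $\{Q(\xb)\in\cP(\xb)\}$ to the distance bound $\dW(Q(\xb),\hPN(\xb))\le r_N$ (which is exactly the definition of $\cP(\xb)$ in \cref{def:ambiguity-set}), and then splitting this distance via the triangle inequality into purely deterministic Lipschitz terms and a single random term governed by empirical measure concentration. Concretely, for each $\xb\in\cX$ I would first locate its nearest observed decision point: by definition of the covering radius $r_\cD$, and since the balls $\cA_r$ are closed and $\cX$ is compact, there is an index $i^\star$ (depending on $\xb$) with $d_\cX(\xb,\xb_{n_{i^\star}})\le r_\cD$.

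First I would decompose, using the triangle inequality for $\dW$,
\begin{align*}
\dW(Q(\xb),\hPN(\xb)) &\le \dW(Q(\xb),Q(\xb_{n_{i^\star}})) + \dW(Q(\xb_{n_{i^\star}}),\hPN(\xb_{n_{i^\star}})) \\
&\quad + \dW(\hPN(\xb_{n_{i^\star}}),\hPN(\xb)).
\end{align*}
The first and third terms are deterministic. By \cref{assumption:continuous}, the first term is at most $c_2\, d_\cX(\xb,\xb_{n_{i^\star}})\le c_2 r_\cD$. By the Lipschitz property in \cref{def:interpolation}, the third term is at most $c_1\inf_{j\in[N_\xb]} d_\cX(\xb,\xb_{n_j})\le c_1 r_\cD$. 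For the middle term, the key observation is that the exactness condition $\omega_i(\xb_{n_i})=1$ together with the simplex constraints $\omega_i\ge 0$, $\sum_i\omega_i=1$ forces $\hPN(\xb_{n_{i^\star}})=\mu_{i^\star}$, so this term equals $\dW(Q(\xb_{n_{i^\star}}),\mu_{i^\star})$: the Wasserstein distance between the true distribution at an observed decision and its empirical estimate from $N_{i^\star}$ i.i.d. samples.

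The remaining and main step is to control the random terms $\dW(Q(\xb_{n_i}),\mu_i)$ simultaneously over all $i\in[N_\xb]$. Here I would invoke a Fournier--Guillin--type measure concentration inequality for the $L_1$-Wasserstein distance; its hypotheses hold because \cref{assumption:compact-continuous} makes $\Xi$ compact (hence all moments are finite) and because, for each fixed observed decision $\xb_{n_i}$, the associated samples are i.i.d. from $Q(\xb_{n_i})$. This yields a tail bound of the form $\PP(\dW(Q(\xb_{n_i}),\mu_i)>\epsilon)\le c_3\exp(-c_4 N_i\epsilon^k)$ with $c_3,c_4$ independent of $\cD$. Setting $\epsilon=\big(b(\beta,c_3)/c_4\big)^{1/k}$ makes $c_4\epsilon^k=b(\beta,c_3)$, so a union bound over the $N_\xb$ observed points gives
\begin{align*}
\PP\!\left(\exists\, i\in[N_\xb]:\dW(Q(\xb_{n_i}),\mu_i)>\epsilon\right)\le c_3\sum_{i=1}^{N_\xb}\exp(-b(\beta,c_3)N_i)\le \beta,
\end{align*}
where the last inequality is exactly the defining property of $b(\beta,c_3)=\inf\{t>0\mid\sum_i\exp(-tN_i)<\beta/c_3\}$.

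On the complementary event, of probability at least $1-\beta$, every middle term is bounded by $\epsilon$ simultaneously; since the deterministic bound $(c_1+c_2)r_\cD$ holds for every $\xb$, combining the three terms yields $\dW(Q(\xb),\hPN(\xb))\le (c_1+c_2)r_\cD+\big(b(\beta,c_3)/c_4\big)^{1/k}=r_N$ for all $\xb\in\cX$ at once, which is the claim. The main obstacle I anticipate is the concentration step: one must invoke the correct version of the Fournier--Guillin inequality so that the exponent matches $\epsilon^k$ (this typically requires $k\ge 2$, with the $\epsilon^{\max\{k,2\}}$ regime otherwise) and so that the constants are genuinely dataset-independent. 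A second subtlety to handle carefully is that, although the nearest index $i^\star$ depends on $\xb$, the single high-probability event bounds $\dW(Q(\xb_{n_i}),\mu_i)$ for \emph{all} observed points at once, and the deterministic Lipschitz terms transport this control to every $\xb\in\cX$, so the resulting distance bound is uniform in $\xb$ despite the supremum over the whole feasible region.
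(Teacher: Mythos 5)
Your proposal is correct and follows essentially the same route as the paper's proof: the same three-term triangle-inequality decomposition at a nearest observed decision point within the covering radius $r_\cD$, the same Lipschitz bounds $c_1 r_\cD$ and $c_2 r_\cD$ on the two deterministic terms, and the same Fournier--Guillin concentration plus union bound over the $N_\xb$ observed points to control the empirical term by $\bigl(b(\beta,c_3)/c_4\bigr)^{1/k}$. Your direct substitution $\epsilon=\bigl(b(\beta,c_3)/c_4\bigr)^{1/k}$ into the tail bound is just a cleaner packaging of the paper's ``allocate $\beta_i$ with $\sum_i\beta_i=\beta$ and optimize'' step, and your explicit remark that $\hPN(\xb_{n_i})=\mu_i$ and your caveat about the $\epsilon^{\max\{k,2\}}$ regime in the concentration inequality are both points the paper leaves implicit.
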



\begin{proof}
    By the triangle inequality of the Wasserstein metric \cite[Theorem 2.5]{wasserstein-triangle}, for any $\xb,\xb_{n_i}\in\cX$, we have:
    \begin{align*}
        d_W(\hPN(\xb),Q(\xb))&\le d_W(\hPN(\xb),\hPN(\xb_{n_i}))\\
        &\quad+d_W(\hPN(\xb_{n_i}),Q(\xb_{n_i}))\\
        &\quad+d_W(Q(\xb_{n_i}),Q(\xb)).
    \end{align*}
    By the definition of $r_\cD$, there exists $i\in[N_\xb]$ such that $d_\cX(\xb,\xb_{n_i})\le r_\cD$. Then, by \cref{def:interpolation}:
    \begin{equation*}
        d_W\left(\hPN(\xb),\hPN(\xb_{n_i})\right)\le c_1r_\cD,
    \end{equation*}
    and by \cref{assumption:continuous}:
    \begin{equation*}
        d_W(Q(\xb_{n_i}),Q(\xb))\le c_2r_\cD.
    \end{equation*}
    For the second term, by measure concentration results for the Wasserstein metric \cite[Theorem 2]{fournier}, there exists constants $c_3,c_4>0$ that are independent of the offline dataset $\cD$, such that for any $\beta_i\in(0,\beta)$, with probability at least $1-\beta_i$:
    \begin{equation*}
        d_W\left(\hPN(\xb_{n_i}),Q(\xb_{n_i})\right)\le \left(\frac{\log(c_3/\beta_i)}{c_4N_i}\right)^\frac{1}{k}.
    \end{equation*}
    For $\beta_i\in(0,\beta)$ satisfying that $\sum_{i=1}^{N_\xb}\beta_i=\beta$, by the union bound, for any $i\in[N_\xb]$, with probability at least $1-\beta$, we have the following inequality:
    \begin{equation*}
        d_W\left(\hPN(\xb_{n_i}),Q(\xb_{n_i})\right)\le \max_{i\in[N_\xb]}\left(\frac{\log(c_3/\beta_i)}{c_4N_i}\right)^\frac{1}{k}.
    \end{equation*}
    Optimizing over $\beta_i$, we have:
    \begin{equation*}
        d_W\left(\hPN(\xb_{n_i}),Q(\xb_{n_i})\right)\le \left(\frac{b(\beta,c_3)}{c_4}\right)^{\frac{1}{k}},\ \forall i\in[N_\xb].
    \end{equation*}
    Combining all three parts completes the proof.
\end{proof}
\cref{thm:non-asymptotic} shows that in order for the ambiguity set $\cP(\xb)$ to contain the true distribution, the radius $r_N$ must consist of two components:
\begin{enumerate}
    \item a covering term $(c_1+c_2)r_\cD$ capturing the sparsity of the observed decisions $\xb_{n_i}$.
    \item a statistical term $(\frac{b(\beta,c_3)}{c_4})^{\frac{1}{k}}$ reflecting the sample sufficiency at each $\xb_{n_i}$.
\end{enumerate}

\subsection{Out-of-Sample and Optimality Guarantee}

Recall that $\hat{J}_N$ and $\hat{\xb}_N$ are the optimal objective value and an optimal solution to \eqref{DD-DRO}, and $J_N$ denotes the expected objective value of $\hat{\xb}_N$ under $Q(\hat{\xb}_N)$ and $J^\star$ is the optimal objective value of \eqref{eq:so}.

\begin{theorem}[Out-of-Sample Performance]\label{thm:out-of-sample-performance}
    Under \cref{assumption:compact-continuous,assumption:continuous}, given $\hPN(\xb)$ in \cref{eq:def-hpn} and $r_N$ in \cref{eq:def-rn}, then with probability at least $1-\beta$, we have:
     \begin{itemize}
         \item $J^\star \le J_N\le  \hat{J}_N$,
         \item $\hat{J}_N-2c_p\cdot r_N \le J_N\le \hat{J}_N$,
         \item $J^\star \le J_N\le J^\star+2c_p\cdot r_N$.
     \end{itemize}
\end{theorem}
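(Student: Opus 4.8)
The plan is to condition everything on the single event $E=\{Q(\xb)\in\cP(\xb),\ \forall \xb\in\cX\}$, which by \cref{thm:non-asymptotic} has probability at least $1-\beta$. Every inequality in the three bullets will be shown to hold \emph{deterministically} on $E$, so the confidence level $1-\beta$ propagates to all of them at once, with no further union bound. The only analytic tool required is the dual (Kantorovich--Rubinstein) characterization of the $L_1$-Wasserstein metric: since \cref{assumption:compact-continuous} makes $\xi\mapsto h(\xb,\xi)$ a $c_p$-Lipschitz function (w.r.t.\ $\|\cdot\|_p$) for each fixed $\xb$, we obtain
$$
\left|\E_{P_1}[h(\xb,\xi)]-\E_{P_2}[h(\xb,\xi)]\right|\le c_p\,\dW(P_1,P_2),\qquad\forall P_1,P_2\in\cM(\Xi,\cB(\Xi)).
$$
I would record this as the first step.

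The second ingredient is a diameter estimate on the ambiguity set. On $E$ we have $Q(\xb)\in\cP(\xb)$, and by \cref{def:ambiguity-set} every $P\in\cP(\xb)$ satisfies $\dW(P,\hPN(\xb))\le r_N$; hence the triangle inequality gives $\dW(P,Q(\xb))\le \dW(P,\hPN(\xb))+\dW(\hPN(\xb),Q(\xb))\le 2r_N$ for all $P\in\cP(\xb)$. Combined with the Lipschitz--Wasserstein bound above, this yields
$$
\E_P[h(\xb,\xi)]\le \E_{Q(\xb)}[h(\xb,\xi)]+2c_p r_N,\qquad\forall P\in\cP(\xb),
$$
so that the worst-case value over $\cP(\xb)$ overshoots the true expectation by at most $2c_p r_N$ whenever $Q(\xb)\in\cP(\xb)$.

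With these two facts the bullets follow by chaining elementary inequalities. For $J_N\le\hat{J}_N$: on $E$, $Q(\hat{\xb}_N)\in\cP(\hat{\xb}_N)$, so $J_N=\E_{Q(\hat{\xb}_N)}[h]\le \max_{P\in\cP(\hat{\xb}_N)}\E_P[h]=\hat{J}_N$. For $J^\star\le J_N$: since $\hat{\xb}_N\in\cX$, optimality of the outer minimum in \eqref{eq:so} gives $J^\star=\min_{\xb\in\cX}\E_{Q(\xb)}[h]\le \E_{Q(\hat{\xb}_N)}[h]=J_N$. For the lower bound $\hat{J}_N-2c_p r_N\le J_N$: apply the diameter estimate at $\xb=\hat{\xb}_N$ and take the maximum over $P\in\cP(\hat{\xb}_N)$, which gives $\hat{J}_N\le J_N+2c_p r_N$. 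Finally, for the optimality-gap bound $J_N\le J^\star+2c_p r_N$: let $\xb^\star$ attain $J^\star$ (or be $\epsilon$-optimal and send $\epsilon\to0$); optimality of $\hat{\xb}_N$ in the outer minimization of \eqref{DD-DRO} yields $\hat{J}_N\le\max_{P\in\cP(\xb^\star)}\E_P[h(\xb^\star,\xi)]$, and the diameter estimate at $\xb^\star$ bounds the right-hand side by $J^\star+2c_p r_N$; combining with $J_N\le\hat{J}_N$ closes the chain.

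The individual steps are short, so the work is bookkeeping rather than a single hard estimate. The step that requires care is the optimality-gap bound: one cannot compare $J_N$ (an out-of-sample quantity evaluated at $\hat{\xb}_N$) to $J^\star$ directly, and the argument must instead route through $\hat{J}_N$, exploiting the outer-minimization optimality of $\hat{\xb}_N$ against the competitor $\xb^\star$ and only then collapsing $\cP(\xb^\star)$ via the diameter estimate. I would also note explicitly that the $\max$ over $\cP(\xb)$ is attained---weak compactness of the Wasserstein ball on the compact set $\Xi$ together with continuity of $h(\xb,\cdot)$---or phrase everything with suprema to sidestep existence altogether.
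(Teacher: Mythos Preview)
Your proposal is correct and follows essentially the same approach as the paper: condition on the event of \cref{thm:non-asymptotic}, use the Kantorovich--Rubinstein duality together with the triangle inequality on the Wasserstein metric to get the $2c_p r_N$ overshoot bound, and route the optimality-gap argument through the robust value at $\xb^\star$ via the outer-minimization optimality of $\hat{\xb}_N$. Your packaging of the Lipschitz--Wasserstein bound plus triangle inequality as a single ``diameter estimate'' is slightly tidier than the paper's inline treatment, and your remark on attainment of the inner max is a point the paper leaves implicit.
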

\begin{proof}
    By \cref{thm:non-asymptotic}, w.p. $1-\beta$, for any $\xb\in\cX$, $Q(\xb)\in\cP(\xb)$, from the definition of \eqref{DD-DRO}, we directly have $J_N\le \hat{J}_N$, and from the optimality of $J^\star$, we also have $J^\star\le J_N$. Define the normalized function $f(\xi)=1/c_p h(\hat{\xb},\xi)$. Under \cref{assumption:compact-continuous}, $f$ is 1-Lipschitz. Let $Q_W(\hat{\xb})$ be the worst-case distribution in the ambiguity set:
    \begin{equation*}
        Q_W(\hat{\xb})=\arg\max_{P\in\cP(\hat{\xb})}\E_P[h(\hat{\xb},\xi].
    \end{equation*}
    Let $||\cdot||_{Lip}$ denote the Lipschitz constant of a function. By the Kantorovich–Rubinstein representation \cite[Section 11.8]{Dudley_2002} for $L_1$ Wasserstein distance, we have:
    \begin{align*}
        |\hat{J}_N-J_N|&=c_p|\E_{Q_W(\hat{\xb})}f-\E_{Q(\hat{\xb})}f|\\
        &\le c_p\sup_{||f||_{Lip}\le 1}|\E_{Q_W(\hat{\xb})} f-\E_{Q(\hat{\xb})} f|\\
        &=c_p\cdot d_W(Q_W(\hat{\xb}),Q(\hat{\xb}))\\
        &\le c_p\cdot d_W(Q_W(\hat{\xb}),\hPN(\hat{\xb}))\\
        &\quad+c_p\cdot d_W(\hPN(\hat{\xb}),Q(\hat{\xb}))\\
        &\le 2c_p\cdot r_N,
    \end{align*}
    where the second equality follows from \cite[Section 11.8]{Dudley_2002}, and the last inequality follows from the construction of the ambiguity set. This yields that
    \begin{equation*}
        \hat{J}_N-2c_p\cdot r_N\le J_N\le \hat{J}_N.
    \end{equation*}
    Similarly, defining $g(\xi)=1/c_ph(\xb^\star,\xi)$ and denoting $\hat{J}^\star=\sup_{P\in\cP(\xb^\star)}\mathbb{E}_{P} [h(\xb^\star,\xi)]$,
    we can show $|\hat{J}^\star-J^\star|\le 2c_p\cdot r_N$. From the optimality of $\hat{J}^\star$, we can remove $|\cdot|$ and obtain $\hat{J}^\star-J^\star\le 2c_p\cdot r_N$. Furthermore, because $\hat{J}_N=\inf_{\xb}\sup_{P\in\cP(\xb)}E_P[h(\xb,\xi)]$, we have:
    \begin{equation*}
        \hat{J}_N\le \hat{J}^\star.
    \end{equation*}
    Therefore, we obtain 
    \begin{equation*}
        \hat{J}_N\le J^\star+ 2c_p\cdot r_N.
    \end{equation*}
    Recall $J^\star\le J_N\le \hat{J}_N$, this further yields
    \begin{equation*}
        J^\star\le J_N\le J^\star+2c_p\cdot r_N.
    \end{equation*}
    This ends the proof.
\end{proof}
\subsection{Tractability of DD-DRO Problem}
\begin{theorem}[Tractable Reformulation]\label{thm:tractable-reformulation}
    Under \cref{assumption:compact-continuous}, given $\hPN(\xb)$ in \cref{eq:def-hpn} and $f_i(\xb)$ in \cref{eq:def-of-f_i}, 
    \eqref{DD-DRO} is equivalent to the following semi-infinite optimization problem:
    \begin{align*}
        \min_{\xb\in\cX,\nu\in\R^{N_\xi+1}}&\ \sum_{i=1}^{N_\xi}f_i(\xb)\nu_i+r_N\cdot\nu_{N_\xi+1} \\
        \mathrm{s.t. }\ &h(\xb,s)-\nu_i-\nu_{N_{\xi}+1}\cdot d(s,\xi_{m_i})\le 0,\\
        &\forall s\in\Xi,\forall i\in[N_\xi],\\
        &\ \nu_1,\cdots,\nu_{N_\xi}\in\R,\ \nu_{N_\xi+1}\ge 0.
    \end{align*}
\end{theorem}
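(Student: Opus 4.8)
The plan is to fix the outer decision $\xb\in\cX$ and dualize the inner worst-case expectation $\max_{P\in\cP(\xb)}\E_P[h(\xb,\xi)]$, exploiting the fact that the nominal measure $\hPN(\xb)$ in \eqref{eq:def-hpn} is discrete with finite support $\{\xi_{m_i}\}_{i\in[N_\xi]}$. First I would lift the inner problem to an optimization over transport plans: by the coupling definition of $\dW$, every $P$ with $\dW(P,\hPN(\xb))\le r_N$ arises as the first marginal of some $\gamma$ whose second marginal is $\hPN(\xb)$ and whose transport cost is at most $r_N$. Because the second marginal is discrete, $\gamma$ factorizes as $\gamma=\sum_{i=1}^{N_\xi}f_i(\xb)\,Q_i\otimes\delta_{\xi_{m_i}}$ for probability measures $\{Q_i\}_{i\in[N_\xi]}$ on $\Xi$. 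This recasts the inner problem as
\begin{align*}
\sup_{\{Q_i\}}\ &\sum_{i=1}^{N_\xi}f_i(\xb)\int_\Xi h(\xb,s)\,Q_i(ds)\\
\mathrm{s.t.}\ &\sum_{i=1}^{N_\xi}f_i(\xb)\int_\Xi d(s,\xi_{m_i})\,Q_i(ds)\le r_N.
\end{align*}

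Next I would attach a single multiplier $\lambda\ge 0$ to the Wasserstein budget and pass to the Lagrangian dual. Grouping terms atom-by-atom gives
$$
\inf_{\lambda\ge 0}\Bigl\{\lambda r_N+\sum_{i=1}^{N_\xi}f_i(\xb)\sup_{Q_i}\int_\Xi\bigl[h(\xb,s)-\lambda d(s,\xi_{m_i})\bigr]Q_i(ds)\Bigr\}.
$$
Since each $Q_i$ is a probability measure, $\Xi$ is compact, and $h(\xb,\cdot)$ and $d(\cdot,\xi_{m_i})$ are continuous under \cref{assumption:compact-continuous}, the inner supremum is attained by a Dirac mass at a maximizer, so it equals the pointwise supremum $\sup_{s\in\Xi}\bigl[h(\xb,s)-\lambda d(s,\xi_{m_i})\bigr]$. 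Introducing epigraph variables $\nu_i$ for these suprema and identifying $\lambda$ with $\nu_{N_\xi+1}$ turns each one into the semi-infinite constraint $h(\xb,s)-\nu_i-\nu_{N_\xi+1}d(s,\xi_{m_i})\le 0$ for all $s\in\Xi$; reattaching the outer $\min_{\xb\in\cX}$ yields precisely the stated program.

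The main obstacle is strong duality: showing that exchanging the supremum over $\{Q_i\}$ with the Lagrangian infimum over $\lambda$ incurs no gap. I would invoke the standard Wasserstein-DRO duality argument \cite{kuhn2018,noyan2022}. The lifted primal is linear in the measures over a convex feasible set; the boundedness of $h$ in \cref{assumption:compact-continuous} together with compactness of $\Xi$ keeps the primal value finite; and since $r_N>0$ by \cref{def:ambiguity-set}, the nominal measure $\hPN(\xb)$ is a strictly feasible (Slater) point, which guarantees attainment of the dual optimum and the absence of a duality gap. A secondary point I would verify is that the reduction of each $\sup_{Q_i}$ to a Dirac-supported maximizer is valid at the level of measures rather than merely pointwise—this again follows from compactness of $\Xi$ and upper semicontinuity of the integrand—and that the factorization of $\gamma$ into conditionals $\{Q_i\}$ is without loss of generality, which holds precisely because $\hPN(\xb)$ has finite support.
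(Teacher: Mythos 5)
Your argument is correct, but it reaches the reformulation by a genuinely different route than the paper. You disintegrate the transport plan against the finitely supported nominal marginal $\hPN(\xb)$, dualize the single Wasserstein budget with a scalar multiplier $\lambda$, reduce each conditional supremum over $Q_i$ to a pointwise supremum over $s\in\Xi$, and close the duality gap with a Slater argument (the nominal measure itself has transport cost $0<r_N$). The paper instead casts the inner problem as a conic linear program over measures on $\Xi\times\Xi$ via \cite{LUO201920}, forms the conic dual through an explicit adjoint operator, and establishes strong duality by \cite{Shapiro2001}, exhibiting a feasible dual point and proving the dual solution set is nonempty and bounded (explicit bounds $\gamma_1\le\nu_i\le(2\gamma_2-\gamma_1)/\gamma_3$ and $\nu_{N_\xi+1}\le(\gamma_2-\gamma_1)/r_N$). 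Your marginal-decomposition derivation is the standard Esfahani--Kuhn style argument: it is more elementary, and it never generates the extraneous dual constraint $w_{N_\xi+1}+h(\xb,s)\ge 0$ on the zero-mass region $\Xi\setminus\{\xi_{m_i}\}_{i\in[N_\xi]}$ that the paper must separately argue away. What the paper's heavier conic route buys is a formulation that feeds directly into the cutting-surface machinery it cites and that extends to ambiguity sets with additional linear (e.g.\ moment) constraints, where a one-dimensional Lagrangian would not suffice. Two details you should state explicitly to make your version airtight: the identification of $\{P:\dW(P,\hPN(\xb))\le r_N\}$ with first marginals of couplings of cost at most $r_N$ uses attainment of the optimal transport plan, which holds here by compactness of $\Xi$; and when $f_i(\xb)=0$ the $i$-th semi-infinite constraint remains in the program even though $\nu_i$ drops out of the objective --- it is satisfiable by taking $\nu_i$ large, so equivalence is preserved, but this deserves a sentence.
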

This problem can be solved using a cutting-surface algorithm \cite{luo2020}[Section 4]. The proof of \cref{thm:tractable-reformulation} can be found in \cref{appendix:tractable-reformulation}. In practice, when the objective function is convex or concave, \eqref{DD-DRO} can be further reduced to an optimization problem with finitely many convex constraints, as we will demonstrate in an application to dynamic pricing in \cref{sec:pricing}.

\section{Case Study: Dynamic Pricing}\label{sec:pricing}
In this section, we demonstrate the practical effectiveness of our proposed method through an application to dynamic pricing.

Consider a firm that sets prices $\xb\in\cX\triangleq[0,x_U]^T$ for its goods or services over a finite horizon $T$ in an ex-ante manner. At each time period $t\le T$, the firm observes demand $\xi_t\in[0,\xi_U]$, and earns revenue $x_t\xi_t$ for satisfying it. Rather than assuming a specific demand model, we consider the general case where the demand vector $\xi \in \Xi \triangleq [0, \xi_U]^T$ follows an unknown, decision-dependent joint distribution $Q(\xb)$. This decision-dependent distribution $Q(\xb)$ models the fact that higher prices may depress demand while lower prices may increase it, with the precise relationship unknown. The cumulative revenue is defined as:
\begin{equation}\label{eq:pricing-obj}
    R(\xb,\xi)\triangleq \xb^\mathrm{T}\xi= \sum_{t=1}^T x_t\xi_{t}.
\end{equation}
The firm has access to historical pricing and demand pairs $(\xb_n, \xi_n)$, which may have been generated under different pricing strategies, and aim to set prices to maximize its expected cumulative revenue. Without relying on restrictive assumptions, the DD-DRO framework provides a pricing strategy with both expected revenue guarantees and tractability.

Specifically, in this setting, under \cref{assumption:continuous}, given $\hPN(\xb)$ in \cref{eq:def-hpn} and the objective function $R(\cdot,\cdot)$ in \cref{eq:pricing-obj}, the problem \eqref{DD-DRO} can be reformulated as the following optimization problem with finitely many convex constraints:
\begin{align*}
    \max_{\xb,\nu,w,\lambda_i,i\in[N_\xi]}&\ \sum_{i=1}^{N_\xi}f_i(\xb)\nu_i+r_N\cdot\nu_{N_\xi+1} \\
    \mathrm{s.t.}&\ -\xi_U\mathbf{1}_{T}^\mathrm{T}\lambda_i-w^\mathrm{T}\xi_{m_i}\ge \nu_i,\ \forall i\in[N_\xi],\\
    &\ w\in\R^{T},||w||_q\le -\nu_{N_{\xi}+1}, \\
    &\ \lambda_i\le \xb+w, \\
    &\ \nu_1,\cdots,\nu_{N_\xi}\in\R,\ \nu_{N_\xi+1}\le 0,\\
    &\ \lambda_i\in\R^{T},\lambda_i\ge 0,\ \forall i\in[N_\xi],
\end{align*}
where $||\cdot||_q$ is the dual norm of $||\cdot||_p$ satisfying $\frac{1}{p}+\frac{1}{q}=1$, and $\mathbf{1}_T$ is a $T$-dimensional column vector of ones. 
Furthermore, given $r_N$ in \cref{eq:def-rn}, the expected revenue can be bounded as described in \cref{thm:out-of-sample-performance}, with $c_p = x_U$.

We empirically evaluate our proposed approach to validate the theoretical results. In the experiment, we model demand as following a time-varying normal distribution, where its mean vector $[\mu_t(\xb)]$ decreases with respect to the time-averaged price:
\begin{equation}
    \mu_t(\xb)=(1.4-0.2\cdot t)\cdot (1.7\cdot x_U-\frac{\sum_{t=1}^Tx_t}{T}),
\end{equation}
and the covariance matrix is a identity matrix. We set $x_U = 1$, $\xi_U = 5$, and $T = 3$. We adopt nearest-neighbor interpolation and vary both the ambiguity radius $r_N$ and the sample size. We then compare the values of $\hat{J}_N$, $J_N$, and $J^\star$. The results, averaged over 10 random seeds, are shown in \cref{fig:exp_tmp}.
\begin{figure}
    \centering
    \begin{subfigure}[b]{0.48\textwidth}
    \includegraphics[width=\linewidth]{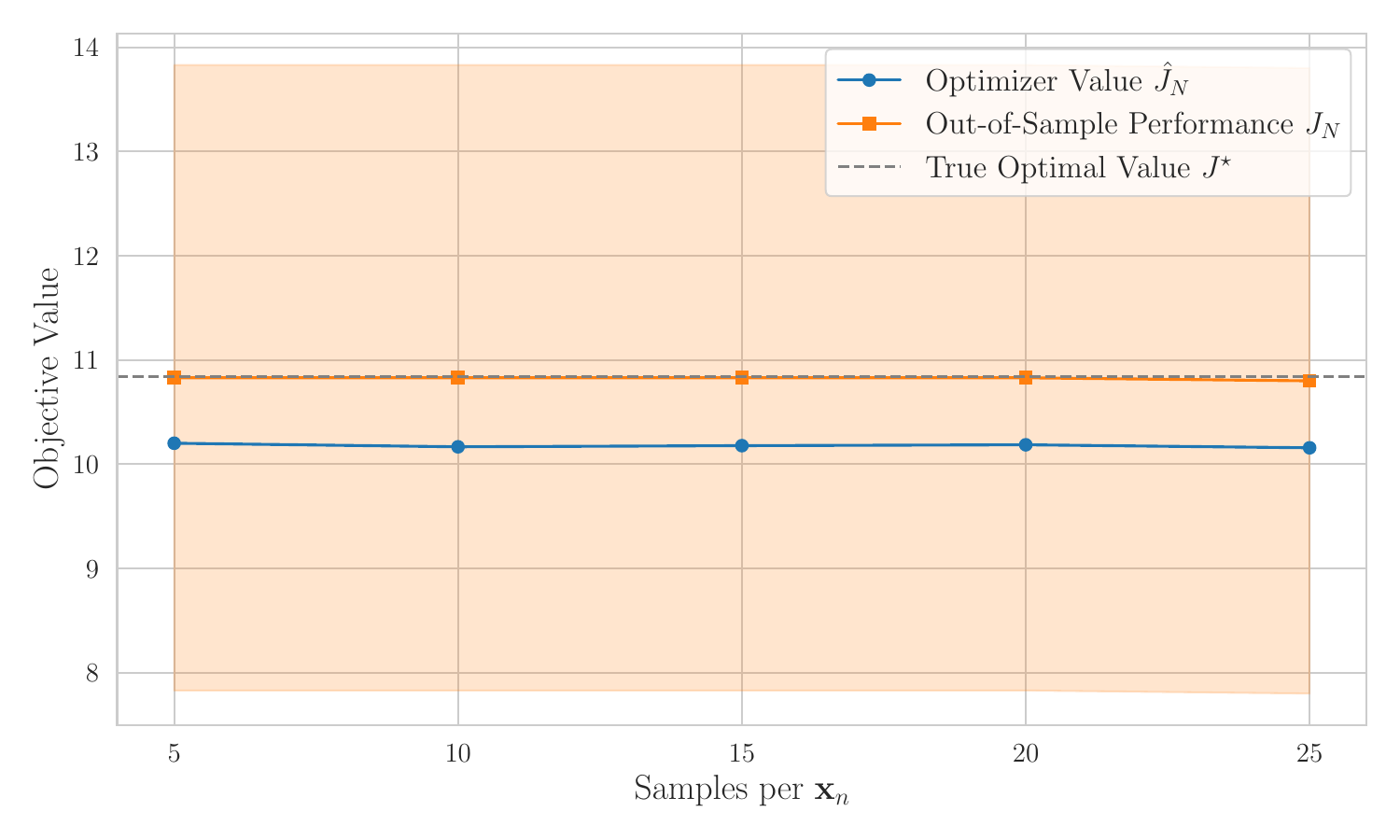}
    \caption{The comparison of objective values as we vary the sample size. $r_N$ is set to be $1.5$.}\label{fig:exp1}
    \end{subfigure}
    \begin{subfigure}[b]{0.48\textwidth}
    \includegraphics[width=\linewidth]{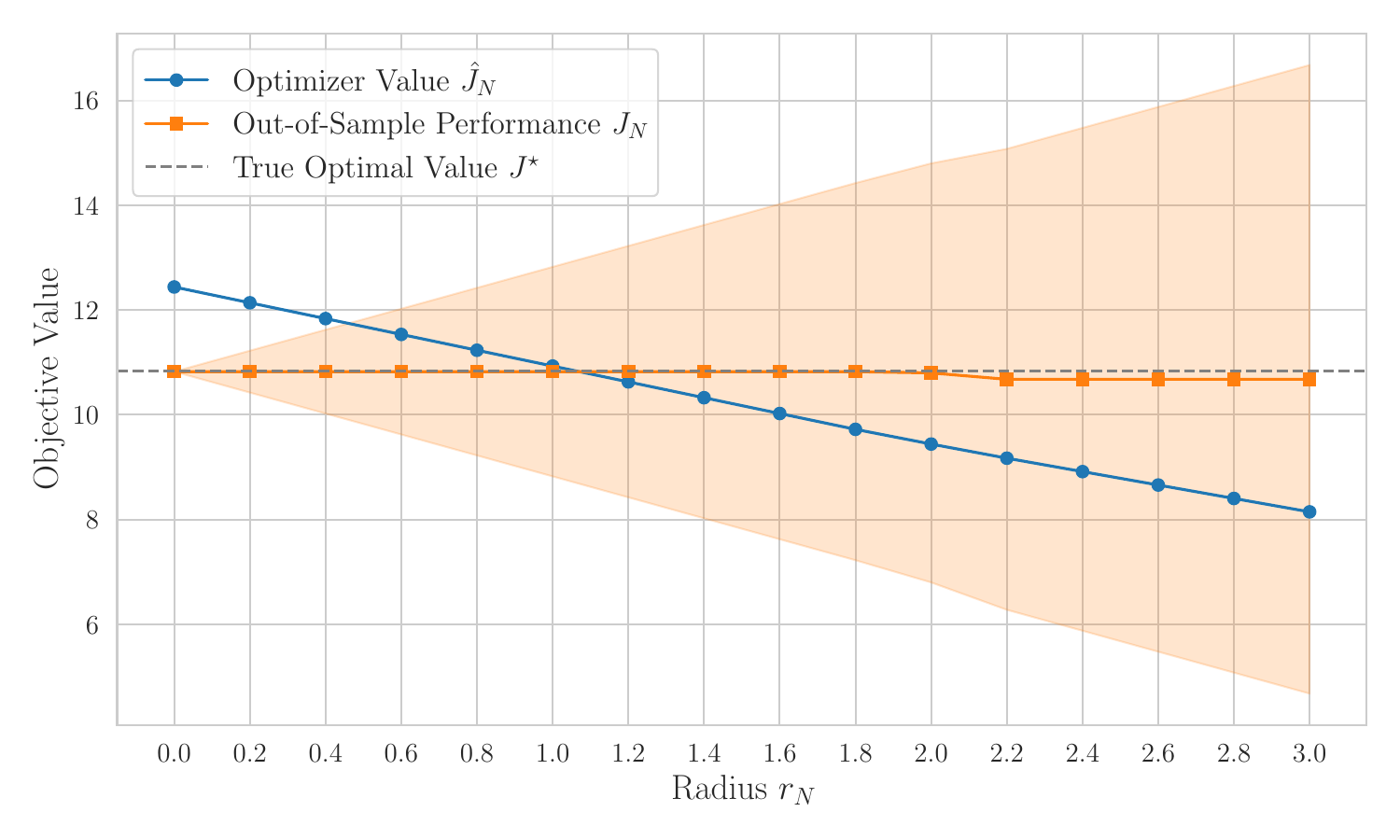}
    \caption{The comparison of objective values as we vary $r_N$. Sample size is set to be $15$ at each $\xb_n$.}\label{fig:exp2}
    \end{subfigure}
    \caption{Experiment results for the dynamic pricing problem. The shaded area is $[J_N- 2x_U\cdot r_N,J_N+2x_U\cdot r_N]$.}
    \label{fig:exp_tmp}
\end{figure}

As shown in \cref{fig:exp1}, with a properly chosen $r_N$, the result supports the validity of \cref{thm:out-of-sample-performance}, and the out-of-sample performance $J_N$ is comparable to the optimal value $J^\star$ and is relatively stable with the sample size. (Note that in the pricing problem, we solve a max–min formulation, so the inequality directions in \cref{thm:out-of-sample-performance} are reversed accordingly.)

In \cref{fig:exp2}, we demonstrate that \cref{thm:out-of-sample-performance} holds across a wide range of $r_N$ values, and that the out-of-sample performance $J_N$ remains stable without significant degradation as $r_N$ increases.
\section{conclusion}
This paper introduces a novel decision-dependent distributionally robust optimization (DD-DRO) framework, targeting for settings in which the underlying model parameter distributions are both unknown and impacted by the decisions. By constructing ambiguity sets via interpolation techniques combined with the Wasserstein metric, we show that this DD-DRO framework is both theoretically grounded and computationally tractable. 

Beyond the technical contributions, this work presents a conceptual shift: instead of globally modeling or learning the data-generating distribution, the decision-makers can interpolate local empirical behavior by controlling robustness level via principled, geometry-aware metrics. This perspective is particularly valuable in data-rich but structure-poor environments, where parametric modeling can be unreliable or infeasible.


\bibliographystyle{IEEEtran}
\bibliography{root}




\section*{APPENDIX}
\subsection{Proof of \cref{thm:tractable-reformulation}}\label{appendix:tractable-reformulation}
\begin{proof}
    For $i\in N_\xi$, let $\Xi^i\triangleq\{\xi_{m_i}\}$, and $\Xi^{N_\xi+1}\triangleq\Xi \backslash(\cup_{i=1}^{N_\xi}\Xi^i)$. By \cite{LUO201920}[Theorem 3.6], the inner problem of \eqref{DD-DRO} is equivalent to the following conic linear program:
    \begin{align*}
        \sup_{\mu}&\ \int_{(s_1,s_2)\in\Xi\times\Xi}h(\xb,s_1)\mu(ds_1\times ds_2)\\
        \mathrm{s.t.}&\ \int\ind[s_2\in\Xi^i]\mu(ds_1\times ds_2)=f_i(\xb),\ i\in[N_\xi],\\
        &\ \int\ind[s_2\in\Xi^{N_\xi+1}]\mu(ds_1\times ds_2)=0, \\
        &\ \int\bigg[\sum_{i\in[N_\xi]}d(s_1,s_2)\ind[s_2\in\Xi^i]\bigg]\mu(ds_1\times ds_2)\le r_N, \\
        &\ \mu \succeq 0.
    \end{align*}
    Note that both the objective function and constraints are linear in $\mu$. To express this conic linear program using operator notation, we define a family of functions $\{h_i\}_{0\le i\le N_\xi+2}$ on $\Xi\times\Xi$ as follows:
    \begin{equation*}
        h_i(s_1,s_2)\triangleq\left\{
        \begin{aligned}
            &h(\xb,s_1),\ i=0,\\
            &\ind[s_2\in \Xi^i],\ i\in[N_\xi+1],\\
            &\sum_{j\in[N_\xi]}d(s_1,s_2)\ind[s_2\in\Xi^j],\ i=N_\xi+2.
        \end{aligned}
        \right.
    \end{equation*}
    Each $h_i$ is $\cB(\Xi) \times \cB(\Xi)$-measurable and bounded by \cref{assumption:compact-continuous}, and hence integrable with respect to any $\mu \succeq 0$. Let $\cM$ denote the linear space of finite signed measures, and define $\cM_+\triangleq\{\mu\in\cM\mid\mu\succeq 0\}$ as the cone of nonnegative measures. Let $\cM_+'$ be the space of all functions integrable with respect to any $\mu\in\cM_+$. Define the bilinear form $\langle\mu,g\rangle\triangleq\int_{(s_1,s_2)\in\Xi\times\Xi }g(s_1,s_2)\mu(ds_1\times ds_2)$ for $\mu \in \cM_+$ and $g \in \cM_+'$. For $\mu\in\cM_+$, we further define the linear operator $\cA:\cM_+\mapsto \R^{N_\xi+2}$ as:
    \begin{equation*}
        \cA\mu\triangleq\bigg[\langle\mu,h_1\rangle,\cdots,\langle\mu,h_{N_\xi+2}\rangle\bigg]^{\top},
    \end{equation*}
    where the superscript $\top$ denotes the transpose. Let
    \begin{equation*}
        \bb(\xb)\triangleq[f_1(\xb),\cdots,f_{N_\xi}(\xb),0,r_N]^{\mathrm{T}}\in \R^{N_\xi+2},
    \end{equation*}
    and define the convex cone $\cK\triangleq\mathbf{0}^{N_\xi+1}\times(-\infty,0]\subset\R^{N_\xi+2}$. The primal problem becomes:
    \begin{align*}
        \sup_{\mu}&\ \langle\mu,h_0\rangle\\
        \mathrm{s.t.}&\ \cA\mu-\bb(\xb)\in\cK,\\
        &\ \mu\in\cM_+.
    \end{align*}
This is a standard conic linear program. Its dual is given by:
\begin{align*}
        \inf_{ w}&\ -\bb(\xb)\cdot  w\\
        \mathrm{s.t.}&\ \cA^\star w+h_0\in-(\cM_+^\star),\\
        &\  w\in\cK^\star,
\end{align*}where $\cK^\star=\R^{N_\xi+1}\times
(-\infty,0]$ is the dual cone of $\cK$, and $-(\cM_+)^\star=\{h\in\cM_+':\langle\mu,h\rangle\le 0,\forall\mu\in\cM_+\}$ is the polar cone of $\cM_+$. For $w\in\R^{N_\xi+2}$, the adjoint operator $\cA^\star:\R^{N_{\xi}+2}\mapsto\R$ is defined by:
\begin{equation*}
    \cA^\star  w=\sum_{i\in[N_\xi+2]} w_ih_i.
\end{equation*}
Expanding the dual problem, we have:
\begin{align*}
    \inf_{ w}&\ -\sum_{i\in[N_\xi]}f_i(\xb) w_i-r_N\cdot w_{N_\xi+2} \\
    \mathrm{s.t.}&\ \langle\mu,h_0\rangle+\sum_{i\in[N_\xi+2]} w_i\langle\mu,h_i\rangle\ge 0,\ \forall\mu\in\cM_+,\\
    &\  w_{N_\xi+2}\ge 0.
\end{align*}
We expand the constraint for all $\mu$ using the fact that $\mu$ is nonnegative and arbitrary. This is equivalent to requiring the integrands to be nonnegative:
\begin{align*}
     w_i+ w_{N_\xi+2}\cdot d(s,\xi_{m_i})+h(\xb,s)&\ge 0,\ \forall s\in\Xi, i\in[N_\xi],\\
     w_{N_\xi+1}+h(\xb,s)&\ge 0,\ \forall s\in\Xi. 
\end{align*}
The second constraint can be removed because $w_{N_\xi + 1}$ does not appear in the objective and can be chosen large enough to satisfy the constraint. Substituting $\nu_i = -w_i$ for $i \in [N_\xi]$ and $\nu_{N_\xi + 1} = -w_{N_\xi + 2}$, we obtain the equivalent reformulation in \eqref{DD-DRO}.

Next, we verify that strong duality holds for any $\xb\in\cX$. By \cite{Shapiro2001}[Proposition 2.8(iii)], it suffices to show that the primal value is finite and that the dual has a non-empty and finite solution set.

By \cref{assumption:compact-continuous}, the objective $g(\xb)$ of the primal is finite for any $\xb\in\cX$. That is, there exist constants $\gamma_1$ and $\gamma_2$ such that:
\begin{align*}
    &\inf_{\xb\in\cX}g(\xb)\ge \gamma_1,\\
    &\sup_{\xb\in\cX}g(\xb)\le \gamma_2.
\end{align*}
One can verify that $\nu=[\gamma_2,\cdots,\gamma_2,0]^\top$ is a feasible solution. Furthermore, by evaluating the constraint at  $s=\xi_{m_i}$, we get:
\begin{equation*}
    \nu_i\ge h(\xb,\xi_{m_i})\ge \gamma_1, \quad \forall i\in[N_\xi].
\end{equation*}
Next, since $\sum_{i=1}^{N_\xi}f_i(\xb)\nu_i+r_N\cdot\nu_{N_\xi+1}\le \gamma_2$, we have $\nu_{N_\xi+1}\le \frac{1}{r_N}(\gamma_2-\gamma_1)$. The rest we need to do is to upper bound $\nu_i$. For a given $\xb\in\cX$, if there exists $i\in[N_\xi]$ such that $f_i(\xb)=0$, notice that $\nu_i$ will not appear in the objective function, and can be arbitrarily large to make the constraint that involves $\nu_i$ is satisfied. Therefore, the existence of $\nu_i$ does not affect the optimal value of the dual problem, and thus can be removed. Therefore, we focus on cases where $f_i(\xb)>0$. Let $\gamma_3\triangleq\inf_{i\in[N_\xi],\xb\in\cX}f_i(\xb)$. By \cref{assumption:compact-continuous}, $\gamma_3>0$. By \cref{def:interpolation}, we have $\sum f_i(\xb)= 1$. Therefore, since $\sum_{i=1}^{N_\xi}f_i(\xb)\nu_i+r_N\cdot\nu_{N_\xi+1}\le \gamma_2$, we have
\begin{equation*}
    \nu_i\le \frac{1}{\gamma_3}(\gamma_2-r_N\cdot\nu_{n_\xi+1}-\sum_{j\neq i}f_j(\xb)\nu_j)\le \frac{1}{\gamma_3}(2\gamma_2-\gamma_1).
\end{equation*}
Therefore, by \cite{Shapiro2001}[Proposition 2.8(iii)], strong duality holds.
\end{proof}

\end{document}